\documentclass[12pt,a4paper]{amsart}
\usepackage{amsfonts, amssymb}
\usepackage[all]{xy}

     \addtolength{\textwidth}{3 truecm}
     \addtolength{\textheight}{-1 truecm}
     \setlength{\voffset}{-.6 truecm}
     \setlength{\hoffset}{-1.3 truecm}

\def\B{\mathcal B}

\newtheorem{thm}{Theorem}

\newtheorem{prop}{Proposition}
\newcommand{\E}{\ensuremath{\mathbb E}}

\numberwithin{equation}{section} \numberwithin{thm}{section}
\numberwithin{lem}{section} \numberwithin{problem}{section}

\newcommand{\p}{\mathbf p}
\newcommand{\q}{\mathbf q}

\newcommand{\Pp}{\mathcal{P}}

\newcommand{\Z}{\mathbb{Z}}

\newcommand{\F}{\mathbb F}
%%% ----------------------------------------------------------------------
%\nopagenumber
%\renewcommand{\baselinestretch}{1.5}
%\textwidth=14cm
\parskip 1.5mm
\begin{document}

\title{Infinite Sidon sequences}
\subjclass[2000]{11B83}
\keywords{Sidon sets, $B_h$ sequences, Probabilistic method, Discrete logarithm}
\author{Javier Cilleruelo}
\thanks{This work was supported by Grant MTM 2011-22851 of MICINN (Spain)}
\address{Instituto de Ciencias Matem\'aticas
 (CSIC-UAM-UC3M-UCM) and Departamento de Matem\'aticas\\
Universidad Aut\'onoma  de Madrid\\
28049 Madrid, Spain } \email{franciscojavier.cilleruelo@uam.es}

%\date{\dateline{Jan 1, 2006}{Jan 2, 2006}\\
%\small Mathematics Subject Classifications: 05C88, 05C89}

\begin{abstract}We present a method to construct dense infinite Sidon sequences based on the discrete logarithm.  We give an explicit construction of an infinite Sidon sequence $A$ with $A(x)= x^{\sqrt 2-1+o(1)}$. Ruzsa proved the existence of a Sidon sequence with similar counting function but his proof was not constructive.

Our method generalizes to  $B_h$ sequences: For all $h\ge 3$,  there is a $B_h$ sequence $A$ such that $A(x)=x^{\sqrt{(h-1)^2+1}-(h-1)+o(1)}$.
\end{abstract}

\maketitle

\section{Introduction}
 According to Erd\H os \cite{E}, in 1932 Simon Sidon asked  him about the growing of those infinite sequences $A$  with the property that all sums $a+a',\ a\le a',\ a,a'\in A$ are distinct. Later Erd\H os named them Sidon sequences. Sidon had found one with counting function $A(x)\gg x^{1/4}$ and  Erd\H os observed that the greedy algorithm, described below, provides another with $A(x)\gg x^{1/3}$.

 Starting with $a_1=1$, let us define $a_n$ to be the smallest integer greater than $a_{n-1}$ and such that the set $\{a_1,\dots ,a_n\}$ is a Sidon set. Since there are at most $(n-1)^3$ distinct elements of the form $a_i+a_j-a_k,\ 1\le i,j,k\le n-1$, it is then clear that $a_n\le (n-1)^3+1$ and the counting function of the sequence $A$ generated by this algorithm (the greedy algorithm)  certainly  satisfies $A(x)\ge x^{1/3}$.

Erd\H os conjectured that for any $\epsilon >0$ should exist a Sidon sequence  with $A(x)\gg x^{1/2-\epsilon}$, but the sequence given by the greedy algorithm was, for almost 50 years, the densest example known. That was until that, in 1981, Atjai, Komlos and Szemeredi \cite{AKS} proved the existence of an infinite Sidon sequence such that $A(x)\gg (x\log x)^{1/3}$. The main tool was a remarkable new result in graph theory that they proved in that seminal paper. They wrote:

\emph{The task of constructing a denser sequence has so far resisted all efforts, both constructive and random methods. Here we use a random construction for giving a sequence which is denser than the above trivial one.}

 So that, it was a surprise when Ruzsa \cite{Ru} overcome the barrier of the exponent $1/3$, proving the existence of an infinite Sidon sequence $A$ with $A(x)=x^{\sqrt 2-1+o(1)}$.
The starting point of Ruzsa's approach was the sequence $(\log p )$ where $p$ runs over all the prime numbers.
Ruzsa's proof is not constructive. For each $\alpha\in [1,2]$  he considered a sequence $A_{\alpha}=(a_p)_{p\in \Pp}$  where each $a_p$ is built using the binary digits of $\alpha\log p$. What Ruzsa proved is that for almost all $\alpha\in [1,2]$ the sequence $A_{\alpha}$ is nearly a Sidon sequence in the sense that removing not too many elements from the sequence
it is possible to destroy all the repeated sums that eventually appear.

Here we present a  method to construct explicitly dense infinite Sidon sequences. It has been inspired by the finite Sidon set $$A=\{\log_g p:\ p\text{ prime },\ p\le \sqrt q\},$$
where  $g$ a generator of $\F_q^*$ and  $\log_g x$ denotes the discrete logarithm of $x$ in $\F_q^*$. The set $A$
is indeed a Sidon set in $\Z_{q-1}$ with size $|A|=\pi(\sqrt q)\sim 2\sqrt q/\log q$.
Despite the simplicity of the construction of this finite Sidon set we have not seen it previously in the literature.

Our main result is Theorem \ref{sqrt2} but to warm up we construct first an infinite Sidon sequence $A=(a_p)_{p\in \Pp}$ indexed with all the prime numbers  with an easy explicit expression for the elements $a_p$. This is the first time that an infinite Sidon sequence $A$ with $A(x)\gg x^{\delta}$ for some $\delta>1/3$ is constructed explicitly.
\begin{thm}\label{sqrt5}
Let $A:=A_{\overline q,c}=(a_p)_{p\in \Pp}$ be the sequence constructed in section 2. We have that for $c=\frac{3-\sqrt 5}2$ it is an infinite Sidon sequence with $$A(x)=x^{\frac{3-\sqrt 5}2+o(1)}.$$
\end{thm}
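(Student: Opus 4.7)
The plan is to build on the finite Sidon set template highlighted in the introduction, $\{\log_g p : p\le \sqrt q\}\subset \Z_{q-1}$, and assemble an infinite sequence from blocks indexed by a fast-growing sequence of primes $\overline q = (q_k)$. I anticipate that the construction in Section~2 assigns each prime $p$ to a unique scale $k=k(p)$ (according to where $p$ sits relative to cutoffs of the shape $q_k^{c}$) and defines $a_p = M_k + (\text{offset encoding }\log_{g_k} p)$, where $g_k$ is a primitive root modulo $q_k$ and the block shifts $M_k$ grow rapidly with $k$. The choice $c=(3-\sqrt 5)/2 < 1/2$ is meant to leave room both for the mod-$q_k$ Sidon argument and for the target density.

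To check the Sidon property I would start from a putative coincidence
\[
a_{p_1}+a_{p_2}\;=\;a_{p_3}+a_{p_4}
\]
and first argue \emph{block alignment}: the shifts $M_k$ are so large compared to the within-block spread that the maximum scale present on each side must coincide, and iterating the same size comparison forces all four primes into a common block $k$. Reducing the equation modulo $q_k-1$ then yields $\log_{g_k}(p_1p_2)\equiv \log_{g_k}(p_3p_4)\pmod{q_k-1}$, i.e.\ $p_1p_2\equiv p_3p_4\pmod{q_k}$. Since every prime in block $k$ satisfies $p\le q_k^{c} < q_k^{1/2}$, both products lie in $[1,q_k)$, so the congruence is an honest integer equality and unique factorization forces $\{p_1,p_2\}=\{p_3,p_4\}$.

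For the counting function I would enumerate primes block by block. Block $k$ contributes $\sim \pi(q_k^{c}) \sim q_k^{c}/\log q_k$ elements, all with $a_p$ of order $M_k$, so $A(M_k)$ coincides with the running total up through scale $k$. Translating between $M_k$ and $q_k$ using the recursion of Section~2 should then produce $A(x)=x^{c+o(1)}$ precisely when $c$ solves the quadratic $c^2-3c+1=0$, whose relevant root is $(3-\sqrt 5)/2$; this quadratic embodies the balance between the Sidon cap $c<1/2$ and the requirement that successive blocks patch together without wasting density.

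The main obstacle will be the block-alignment step: calibrating the shifts $M_k$ so that no additive relation can straddle two distinct scales, while simultaneously keeping the shifts small enough for the counting function to achieve the claimed exponent. This trade-off is what selects the growth rate of $\overline q$ and pins $c$ down to its stated value. Once alignment is in place, the within-block Sidon property follows directly from the finite-field argument of the introduction, and the density estimate is a routine application of the prime number theorem.
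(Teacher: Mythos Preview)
Your guess about the construction is off in a way that breaks the argument. In Section~2 each prime $p\in\Pp_k$ is \emph{not} encoded by a single discrete logarithm in one block; rather $a_p$ carries $k$ digits $x_1(p),\dots,x_k(p)$ in a mixed base $4q_1,4q_2,\dots$, where $x_j(p)$ records $\log_{g_j}p\pmod{q_j}$ for \emph{every} $j\le k$. Consequently, in a coincidence $a_{p_1}+a_{p_2}=a_{p_1'}+a_{p_2'}$ the paper does not (and cannot) force all four primes into one block. The digit-by-digit equality only pairs the scales: $p_1,p_1'\in\Pp_{k_1}$ and $p_2,p_2'\in\Pp_{k_2}$ with $k_2\le k_1$. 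Two separate congruences then emerge, one from the low digits, $p_1p_2\equiv p_1'p_2'\pmod{q_1\cdots q_{k_2}}$, and a second one from the digits in positions $k_2{+}1,\dots,k_1$, namely $p_1\equiv p_1'\pmod{q_{k_2+1}\cdots q_{k_1}}$. Comparing sizes yields the pair of inequalities $(1-c)k_1^2<k_2^2<\tfrac{c}{1-c}k_1^2$, which are incompatible precisely when $(1-c)^2\ge c$, i.e.\ $c^2-3c+1\ge 0$; this is where $c=\tfrac{3-\sqrt5}{2}$ enters. So the quadratic you identified is correct, but it comes from the \emph{cross-block} Sidon analysis, not from any density balancing: Proposition~2.1 gives $A(x)=x^{c+o(1)}$ for every $c\in(0,1/2)$, independently of the Sidon property.

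Your proposed route---align all four primes into one block and then use $p_1p_2<q_k$---handles only the case $k_1=k_2$ (which indeed needs just $c<1/2$). The hard case $k_2<k_1$ requires the second congruence, and that congruence is only available because of the multi-digit encoding; a single-offset construction $a_p=M_k+\log_{g_k}p$ has no information at levels between $k_2$ and $k_1$ with which to constrain $p_1$ against $p_1'$. Without that extra handle you cannot rule out cross-block collisions while keeping the shifts $M_k$ small enough to reach exponent $(3-\sqrt5)/2$.
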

Theorem \ref{sqrt5} is weaker than Theorem \ref{sqrt2}, but we have included it as a separated theorem because the simplicity of the construction.
In Theorem \ref{sqrt2} we construct explicitly a denser  infinite Sidon sequence $A=(a_p)_{p\in \Pp*}$ adding the deletion technique to our method. The starting point is  the sequence $A_{\overline q,c}=(a_p)_{p\in \Pp}$ with $c=\sqrt 2-1$. This sequence is not a Sidon sequence but we can delete some elements $a_p$ from the sequence  to destroy the repeated sums that could appear. Thus,  the final set of indexes of our sequence will be  not the whole set of the prime numbers, as in the  construction of Theorem \ref{sqrt5}, but the set $\Pp^*$ formed by the survived primes after we remove a thin subsequence of the primes that we can describe explicitly.
\begin{thm}\label{sqrt2}
The sequence $A=(a_p)_{p\in \Pp^*}$ constructed in subsection \ref{psqrt2} is an infinite Sidon sequence with $$A(x)=x^{\sqrt2-1+o(1)}.$$
\end{thm}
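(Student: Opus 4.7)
The plan is to start from the sequence $A_{\overline q,c}=(a_p)_{p\in \Pp}$ produced by the Section 2 construction, but now with the larger parameter $c=\sqrt 2-1$ in place of the $c=(3-\sqrt 5)/2$ used for Theorem \ref{sqrt5}. With this value of $c$ the density already climbs to $x^{\sqrt 2-1+o(1)}$, but the tighter packing forces some repeated sums $a_p+a_{p'}=a_r+a_{r'}$ to appear, so the sequence is no longer Sidon. The strategy is the classical deletion method: throw away one prime out of every such bad quadruple, and check that so few primes are lost that the counting function is unchanged.

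Concretely, I would first decompose $A=\bigsqcup_n B_n$, where $B_n$ collects the elements $a_p$ coming from the $n$-th modulus $q_n$. Within a single block the Sidon property carries over from the finite discrete-logarithm set of Section 2 essentially verbatim, so only cross-block collisions remain to be dealt with. Translating a cross-block equation $a_p+a_{p'}=a_r+a_{r'}$ through the discrete logarithm produces, for each relevant pair of block indices $(m,n)$, a multiplicative congruence of the form $p\cdot p'\equiv r\cdot r'\pmod{q_n-1}$ together with explicit size constraints on the four primes. Combining the prime number theorem for counting primes in the prescribed ranges with an upper bound for the number of solutions of such a congruence, one aims to show that the total number $R(x)$ of bad quadruples with all entries at most $x$ satisfies $R(x)=o(A(x))$; the exponent $\sqrt 2-1$ is precisely the largest $c$ for which this estimate survives, which is what forces the choice in the statement.

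Once the collision bound is in hand the finish is routine: let $\Pp^*$ consist of those primes $p$ that are never the smallest index of a bad quadruple, and keep $(a_p)_{p\in\Pp^*}$. Since one deletion per quadruple kills the collision and $R(x)=o(A(x))$, we get $|\Pp^*\cap[1,x]|=(1+o(1))|\Pp\cap[1,x]|$, so the pruned sequence preserves the counting function $x^{\sqrt 2-1+o(1)}$ while being a genuine Sidon sequence. The main obstacle is the collision count itself: controlling how often the product of two small primes matches the product of two others in discrete logarithm modulo the varying $q_n-1$ is the delicate quantitative step, and it is precisely this analysis—not available in the easier setting of Theorem \ref{sqrt5}, where the weaker $c=(3-\sqrt 5)/2$ makes collisions vanish outright—that makes the extra deletion argument both necessary and sufficient.
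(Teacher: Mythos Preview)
Your overall architecture is right---take $c=\sqrt 2-1$, accept that collisions now occur, delete one prime per bad quadruple, and verify that the number of deletions is small---but the mechanism you sketch for bounding the collisions is not the one that works, and it carries two concrete errors.

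First, the congruence. From $x_j(p_1)+x_j(p_2)=x_j(p_1')+x_j(p_2')$ and $g_j^{x_j(p)}\equiv p\pmod{q_j}$ you obtain $p_1p_2\equiv p_1'p_2'\pmod{q_j}$, not $\pmod{q_n-1}$; and because this holds for every $j\le k_2$ (where $p_2,p_2'\in\Pp_{k_2}$), the Chinese remainder theorem upgrades it to $p_1p_2\equiv p_1'p_2'\pmod{q_1\cdots q_{k_2}}$. Second, and more damagingly, your outline uses only this one congruence. The paper's Proposition~\ref{con} extracts a \emph{second} congruence from the digits $k_2<j\le k_1$, namely $p_1\equiv p_1'\pmod{q_{k_2+1}\cdots q_{k_1}}$, and both are indispensable. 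Writing $Q_1=q_1\cdots q_{k_2}$ and $Q_2=q_{k_2+1}\cdots q_{k_1}$, the two congruences together give the algebraic identity
\[
p_1(p_2-p_2')=\frac{p_1p_2-p_1'p_2'}{Q_1}\,Q_1+\frac{p_1'-p_1}{Q_2}\,p_2'Q_2=s_1Q_1+s_2p_2'Q_2,
\]
with $s_1,s_2$ nonzero integers of controlled size. This is the heart of the argument: every bad $p_1\in\Pp_{k_1}$ must \emph{divide} some explicit nonzero integer $s$ in a set $S_{k_2,k_1}$ whose cardinality you can bound directly, and a size comparison shows no $s$ can carry two such prime divisors. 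Summing $|S_{k_2,k_1}|$ over admissible $k_2$ yields $|\B_{k_1}|\le(\tfrac12+o(1))|\Pp_{k_1}|$, exactly at $c=\sqrt 2-1$.

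Your plan of ``count solutions of a congruence via the prime number theorem'' never gets to this divisibility step, and without it the single congruence modulo $Q_1$ is too weak: since $Q_1\approx 2^{k_2^2}$ while $p_1p_2$ can be as large as $2^{c(k_1^2+k_2^2)}$, a naive residue count does not close. The missing idea is precisely the combination of the two congruences into the identity above.
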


Note that the exponent of the counting function in the explicit construction of Theorem \ref{sqrt2} is the same  that Ruzsa  obtained in his random construction. Furthermore, it can be checked easily that the algorithm used to construct the Sidon sequence in Theorem \ref{sqrt2} is efficient in the sense that only $O(x^{\sqrt 2-1+o(1)})$ elementary operations are needed to list all the elements $a_p\le x$.

Our approach also generalizes to $B_h$ sequences, that is, sequences such that all the sums $a_1+\cdots +a_h,\ a_1\le \cdots \le a_h$ are distinct. To deal with theses cases, however, we need to introduce a probabilistic argument in an unusual way and it becomes the proof of the following theorem not constructive.

\begin{thm}\label{Bh}
For any $h\ge 3$ there exists an infinite $B_h$ sequence $A$ with $$A(x)=x^{\sqrt{(h-1)^2+1}-(h-1)+o(1)}.$$
\end{thm}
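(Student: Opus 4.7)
The plan is to mimic the discrete-logarithm construction of Theorems \ref{sqrt5}--\ref{sqrt2}, replacing the Sidon role of $h=2$ by a $B_h$ analogue. For each prime $q$ with generator $g$ of $\F_q^*$, the set
\[
S_q := \{\log_g p : p \text{ prime},\ p \le q^{1/h}\} \subset \Z_{q-1}
\]
is a $B_h$ set: a congruence $\log_g p_1+\cdots+\log_g p_h \equiv \log_g p'_1+\cdots+\log_g p'_h \pmod{q-1}$ translates into $p_1\cdots p_h \equiv p'_1\cdots p'_h$ in $\F_q^*$, and since both products are at most $q$, the congruence lifts to an equality of integers, which by unique factorization forces the two multisets of primes to coincide. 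Thus at the local level one automatically obtains a $B_h$ set of size $\sim h q^{1/h}/\log q$.

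I would then build $A_{\overline q,c}=(a_p)_{p\in\Pp}$ in analogy with Section~2, with a suitably chosen growth rate for $\overline q=(q_n)$ and the exponent
\[
c = \sqrt{(h-1)^2+1}-(h-1),
\]
the positive root of $c^2+2(h-1)c=1$. This is the unique balance between the density of the local $B_h$ sets $S_{q_n}$ and the cost of the "carries" produced by the CRT reconstruction between consecutive scales $q_n,q_{n+1}$; with it one has $a_p\asymp p^{1/c}$ and $A(x)=x^{c+o(1)}$. For $h=2$ the equation reads $c^2+2c=1$, recovering $c=\sqrt 2-1$ and Theorem \ref{sqrt2}.

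The sequence so constructed is not yet $B_h$: collisions can arise from carries across different scales even though each $S_{q_n}$ is $B_h$ in $\Z_{q_n-1}$. The key step is to destroy these residual collisions by deletion. For $h=2$ one counts bad pairs and removes one endpoint deterministically. For $h\ge 3$ the collision hypergraph is much more entangled---the same prime can sit in many distinct bad $h$-tuples in a highly correlated way---and deterministic deletion is no longer known to produce a thin enough discard. I would instead retain each prime $p$ independently with a carefully tuned probability $\eta_p$ and show, via first and second moment bounds, that almost surely the retained index set $\Pp^*\subset\Pp$ satisfies $|\Pp^*\cap[1,x]|=\pi(x)^{1-o(1)}$ while the expected number of surviving bad $h$-tuples up to scale $x$ is $o(1)$. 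Removing one endpoint from each remaining bad $h$-tuple then yields the desired $B_h$ sequence with $A(x)=x^{c+o(1)}$.

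The main obstacle is the simultaneous control of infinitely many scales in the random deletion: collisions at scale $x$ may involve primes much larger than $x$, so a level-by-level argument accumulates unacceptable error. One must design the retention probabilities $\eta_p$ so that a Borel--Cantelli-type conclusion holds uniformly in $x$, while also keeping $|\Pp^*\cap[1,x]|$ of order $\pi(x)^{1-o(1)}$. The probabilistic argument is "unusual" because the randomness is not placed on the elements $a_p$ themselves---as in Ruzsa's construction---but on the index set of primes, leaving the explicit discrete-logarithm formula for $a_p$ untouched. It is precisely this step that costs the proof its effectiveness.
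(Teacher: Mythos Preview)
Your overall framework---the discrete-logarithm construction with exponent $c=\sqrt{(h-1)^2+1}-(h-1)$, followed by deletion of elements involved in residual collisions---matches the paper. The $B_h$ analogue of Proposition~\ref{con} (Proposition~\ref{con2}) records the arithmetic constraints any bad $2l$-tuple $(p_1,\dots,p_l,p_1',\dots,p_l')$ must satisfy, the crucial one being that $q_1\cdots q_{k_1}$ divides the explicit nonzero integer $\prod_{i=1}^l(p_1\cdots p_i-p_1'\cdots p_i')$.

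The gap is in where you place the randomness. You propose to fix the base $\overline q$ and retain each prime $p$ independently with probability $\eta_p$. But random retention can only shrink the count of bad $2l$-tuples by a factor of order $\eta^{2l}$; to conclude that the surviving bad set is $o(|\Pp_k|)$ you would first need an upper bound on $|\text{Bad}_l(\overline q,k_l,\dots,k_1)|$ for your \emph{fixed} $\overline q$, and the paper states explicitly that no usable bound of this kind is available for $h\ge 3$ (``we are not able to give a good upper bound for $|\text{Bad}_l(\overline q,k_l,\dots,k_1)|$ for a concrete sequence $\overline q$''). The paper's way out is to randomize $\overline q$ itself: each $q_j$ is drawn uniformly from the primes in $(2^{2j-1},2^{2j+1}]$. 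For a \emph{given} tuple $(p_1,\dots,p_l')$, being bad forces the divisibility $q_1\cdots q_{k_1}\mid\prod_i(p_1\cdots p_i-p_1'\cdots p_i')$, and the divisor bound $\tau(n)=n^{O(1/\log\log n)}$ makes the probability of this at most $2^{-k_1^2+O(k_1^2/\log k_1)}$. Summing over all tuples and all $k$ gives $\E\sum_k|\B_k(\overline q)|/|\Pp_k|<\infty$, hence $|\B_k(\overline q)|=o(|\Pp_k|)$ for almost every $\overline q$; the deletion step (remove the largest element in each bad equation) is then completely deterministic. So the ``unusual'' probabilistic ingredient is a random choice of moduli playing the role of Ruzsa's random $\alpha$, not a random thinning of the prime index set as you suggest.
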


The exponent in Theorem \ref{Bh} is greater than $1/(2h-1)$, that given by the greedy algorithm for $B_h$ sequences. It should be mentioned that R. Tesoro and the author \cite{CT} have proved recently Theorem \ref{Bh} in the cases $h=3$ and $h=4$ using a variant of Ruzsa's approach which makes use of the sequence $(\theta(\p))$ of  arguments of the Gaussian primes $\p=|\p|e^{2\pi i\theta(\p)}$ instead of the sequence $(\log p)$ considered by Ruzsa. However, that proof does not extend  to all $h$.

In the last section we present an alternative method to construct infinite Sidon sequences. It has the same flavor than the construction described in section 2 but the irreducible polynomials in $\F_2[X]$ play the role of the prime numbers in the set of positive integers.  The finite version of this construction is the following. We identify $\F_{2^n}\simeq \F_2[X]/q(X)$ where $q:=q(X)$ is an irreducible polynomial  in $\F_2[X]$ with $\deg q=n$. Let $g$ a generator of $\F_{2^n}$ and $g:=g(X)$ the corresponding polynomial
$$A=\{x(p):\ g^{x(p)}\equiv p\pmod{q},\ p \text{ irreducible in }\F_2[X],\  \deg(p)<n/2\}$$
is a Sidon set in $\Z_{2^n-1}$ of size $|A|\gg 2^{n/2}/n$.

We present an sketch of how to reprove Theorems \ref{sqrt5},\ \ref{sqrt2} and \ref{Bh} using this alternative approach.

\section{The construction}Let us consider the following well known fact, which will be used in the construction of our sequence:

 \emph{Given an infinite sequence of positive integers $\overline b:=b_1,\dots ,b_j,\dots$ (the base), any non negative integer $n$ can be written, in an only way, in the form $$n=x_1+x_2b_1+x_3b_1b_2+\cdots +x_jb_1\cdots b_{j-1}+\cdots$$
with digits $0\le x_j<b_{j},\ j\ge 1$ and we represent it as $n:=x_k\dots x_1.$}
\subsection{Construction of the sequence $A_{\overline q,c}$}
We consider the basis $\overline q:= 4q_1,\dots, 4q_j\dots $ where
$q_1,\dots, q_j,\dots $ is a given  infinite  sequence of prime numbers satisfying \begin{equation}\label{q}2^{2j-1}<q_j\le 2^{2j+1}\end{equation} for all $j\ge 1$.
Choose, for each $j$, a primitive root $g_j$ of $\mathbb F_{q_j}^*$.

Fix $c,\ 0<c<1/2$ and consider the partition of the set of the prime numbers, $$\Pp=\bigcup_{k\ge 2} \Pp_k,\quad \text{ where } \quad \Pp_k=\left \{p\text{ prime}:\ 2^{c(k-1)^2-3}<p\le 2^{ck^2-3}\right \}.$$

Let us construct the sequence $A_{\overline q,c}=(a_p)_{p \in \Pp}$ as follows: for $p\in \Pp_k$,  we define
$$a_p=x_k(p)\dots x_1(p),  $$
where the digit $x_j(p)$ in the basis $\overline q$ is the solution of the congruence
\begin{eqnarray}\label{g}g_j^{x_j(p)}\equiv p\pmod{q_j},\qquad q_j+1\le x_j(p)\le 2q_j-1.\end{eqnarray} We define $x_j(p)=0$ when $j>k$.
\subsection{Properties of the sequence $A_{\overline q,c}$}
\begin{prop}\label{growing} All the elements $a_p$ of the sequence $A_{\overline q,c}$ are distinct and the counting function  satisfies $A_{\overline q ,c}(x)=x^{c+o(1)}$.
\end{prop}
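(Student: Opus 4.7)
The statement splits into an arithmetic part (all $a_p$ are distinct) and an analytic part (the growth rate of $A_{\overline q,c}$). The plan is to handle distinctness by noting that the index $k$ with $p\in\Pp_k$ is recoverable from the base-$\overline q$ expansion of $a_p$, after which the Chinese Remainder Theorem applied to the defining congruences forces $p=p'$. For the growth rate, I would show that $a_p$ lies in a narrow window of the form $2^{k^2+O(k)}$ whenever $p\in\Pp_k$, and then count via the prime number theorem.

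For distinctness, observe first that the constraint $q_j+1\le x_j(p)\le 2q_j-1$ forces each digit $x_j(p)$ to be nonzero for $1\le j\le k$, while $x_j(p)=0$ for $j>k$ by definition. Hence the position of the leading nonzero digit of $a_p$ in base $\overline q$ is exactly $k$, and any equality $a_p=a_{p'}$ already implies $p$ and $p'$ belong to a common class $\Pp_k$. Matching all digits then gives $g_j^{x_j(p)}\equiv g_j^{x_j(p')}\pmod{q_j}$, i.e., $p\equiv p'\pmod{q_j}$ for every $j\le k$. By the Chinese Remainder Theorem,
\[p\equiv p' \pmod{q_1q_2\cdots q_k},\]
and the product satisfies $q_1\cdots q_k>2^{1+3+\cdots+(2k-1)}=2^{k^2}$ by the lower bound in \eqref{q}. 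Since $p,p'\le 2^{ck^2-3}<2^{k^2}$ (recall $c<1/2$), the congruence forces $p=p'$.

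For the counting function, I would verify the two-sided estimate
\[2^{k^2+2k-2}\ \le\ a_p\ <\ 2^{k^2+4k}\qquad(p\in\Pp_k).\]
The upper bound comes from summing the base expansion and using $4q_j<2^{2j+3}$, while the lower bound uses only the leading digit: $a_p\ge x_k(p)\prod_{j<k}4q_j>q_k\cdot 2^{k^2-1}>2^{k^2+2k-2}$. With this window, given $x$ let $k_0$ be the largest integer with $k_0^2+4k_0\le \log_2 x$ and $k_1$ the smallest with $k_1^2+2k_1-2>\log_2 x$, so $k_0,k_1=\sqrt{\log_2 x}+O(1)$. Every $p\in\Pp_j$ with $j\le k_0$ satisfies $a_p\le x$, and none with $j>k_1$ do, giving
\[\pi\!\bigl(2^{ck_0^2-3}\bigr)\ \le\ A_{\overline q,c}(x)\ \le\ \pi\!\bigl(2^{ck_1^2-3}\bigr).\]
Since $k_0^2,k_1^2=\log_2 x+O(\sqrt{\log x})$, both extremes equal $2^{c\log_2 x+o(\log x)}=x^{c+o(1)}$ by the prime number theorem, and the claim follows.

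The only slightly delicate point is the arithmetic compatibility between the size of $Q_k=q_1\cdots q_k$ (roughly $2^{k^2}$) and the maximum element of $\Pp_k$ (roughly $2^{ck^2}$): the hypothesis $c<1/2$, combined with the lower bound $q_j>2^{2j-1}$, ensures comfortable distinctness. A minor technical nuisance is that if $p=q_j$ for some $j\le k$ the defining congruence $g_j^{x_j(p)}\equiv p\pmod{q_j}$ has no solution, but since $q_j\le 2^{2j+1}$ while $p>2^{c(k-1)^2-3}$, this exception occurs only for finitely many primes and does not affect either the distinctness conclusion (simply exclude those $p$) or the asymptotic count.
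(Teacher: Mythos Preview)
Your argument is correct and follows essentially the same route as the paper: recover $k$ from the leading nonzero digit, apply CRT across the $q_j$'s to force $p=p'$, and for the count sandwich $a_p$ (equivalently $x$) between products of the $4q_j$'s of size $2^{k^2+O(k)}$ before invoking the prime number theorem. Your explicit window $2^{k^2+2k-2}\le a_p<2^{k^2+4k}$ is just a repackaging of the paper's choice of $k$ via $(4q_1)\cdots(4q_k)<x\le (4q_1)\cdots(4q_{k+1})$. One nice addition: you flag the degenerate case $p=q_j$ (where the discrete logarithm is undefined), which the paper passes over; your observation that $q_j\le 2^{2k+1}$ while $p>2^{c(k-1)^2-3}$ rules this out for all large $k$ is correct and worth keeping.
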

\begin{proof}

Suppose that $a_p=a_{p'}$. Thus $x_j(p)=x_j(p')$ for all $j\ge 1$ and we have, by construction, that  $p,p'\in \Pp_k$ where $k$ is the largest $j$ such that $x_j(p)\ne 0$. We also  know that $$p\equiv g^{x_j(p)}\equiv g^{x_j(p')}\equiv p'\pmod{q_j}$$ for all $j\le k$ and then, $p\equiv p' \pmod{q_1\cdots q_{k}}$.  If $p\ne p'$ we would have  $$2^{ck^2}\ge |p-p'|\ge q_1\cdots q_k>2^{1+3+\cdots+(2k-1)}= 2^{k^2},$$ which is impossible because $c<1$.

To study the growing of the sequence $A_{\overline q,c}$, we consider, for any $x$, the integer $k$ such that  $$(4q_1)\cdots (4q_{k})<x\le (4q_1)\cdots (4q_{k+1}).$$ Using \eqref{q} we can check that $2^{k^2+2k}<x\le 2^{(k+2)^2+2(k+1)}$ and then $2^{k^2}=x^{1+o(1)}$.

We observe that if $p\le 2^{ck^2-3}$ then $p\in \Pp_l$ for some $l\le k$ and then $$a_p=x_1(p)+\sum_{j\le l}x_j(p)(4q_1)\cdots (4q_{j-1})\le  (4q_1)\cdots (4q_k)\le x.$$ Thus $$A_{\overline q,c}(x)\ge \pi(2^{ck^2-3})=  2^{ck^2(1+o(1))}=x^{c+o(1)}.$$
For the upper bound we observe that if $p>2^{c(k+1)^2-3}$ then $p\in \Pp_l$ for some $l\ge k+2$ and then $a_p>(4q_1)\cdots (4q_{l-1})\ge(4q_1)\cdots (4q_{k+1})\ge x.$ Thus
$$A_{\overline q,c}(x)\le \pi(2^{c(k+2)^2-3})=2^{ck^2(1+o(1))}=x^{c+o(1)}.$$
\end{proof}

The next proposition concerns to the Sidoness quality of the sequence $A_{\overline q,c}$.
\begin{prop}\label{con} Suppose that there exist $a_{p_1},a_{p_2},a_{p_1'},a_{p_2'}\in A_{\overline q,c},\ a_{p_1}>a_{p_1'}\ge a_{p_2'}>a_{p_2}$ such that $$a_{p_1}+a_{p_2}=a_{p_1'}+a_{p_2'}.$$ Then we have that:
\begin{enumerate}
  \item[i)]  there exist $k_2,k_1,\ k_2\le k_1$ such that $ p_1,p_1'\in \Pp_{k_1},\ p_2,p_2'\in \Pp_{k_2}$.
  \item[ii)] $p_1p_2\equiv p_1'p_2'\pmod{q_1\cdots q_{k_2}}$
  \item[iii)] $\quad p_1\equiv p_1'\pmod{q_{k_2+1}\cdots q_{k_1}} \text{ if } k_2<k_1.$
\item[iv)] $(1-c)k_1^2<k_2^2<\frac c{1-c}k_1^2.$
\end{enumerate}
\end{prop}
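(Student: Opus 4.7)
The pivotal observation is that every nonzero digit of an element of $A_{\overline q,c}$ lies in $[q_j+1,2q_j-1]$, so the sum of any two such digits is at most $4q_j-2<4q_j$. Addition in base $\overline q$ is thus carry-free, and the hypothesis $a_{p_1}+a_{p_2}=a_{p_1'}+a_{p_2'}$ is equivalent to the system of digit-wise identities
\begin{equation}\label{digitwise}
x_j(p_1)+x_j(p_2)=x_j(p_1')+x_j(p_2'),\qquad j\ge 1.
\end{equation}
The whole proposition is extracted by reading \eqref{digitwise} at carefully chosen indices.

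For (i) the main tool is a trichotomy: each side of \eqref{digitwise} lies in exactly one of the disjoint intervals $\{0\}$, $[q_j+1,2q_j-1]$, $[2q_j+2,4q_j-2]$, according as $0$, $1$, or $2$ of the summands are nonzero. Setting $K:=\max(k_1,k_2,k_1',k_2')$ and comparing at $j=K$, either both sides have two nonzero summands (forcing $k_1=k_2=k_1'=k_2'=K$) or each has exactly one. In the latter case, since a single nonzero digit at position $K$ dominates the contribution of all lower positions, the ordering $a_{p_1}>a_{p_1'}\ge a_{p_2'}>a_{p_2}$ forces $k_1=k_1'=K$; applying the same trichotomy at $L:=\max(k_2,k_2')$ then yields $k_2=k_2'=L$.

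Parts (ii) and (iii) follow by pushing \eqref{digitwise} through the discrete logarithm. At each $j\le k_2$ all four digits are nonzero, so raising $g_j$ to both sides of \eqref{digitwise} gives $p_1p_2\equiv p_1'p_2'\pmod{q_j}$, and the Chinese Remainder Theorem applied to the distinct primes $q_1,\dots,q_{k_2}$ delivers (ii). When $k_2<j\le k_1$ the digits $x_j(p_2)$ and $x_j(p_2')$ vanish, so $x_j(p_1)=x_j(p_1')$ yields $p_1\equiv p_1'\pmod{q_j}$, and CRT gives (iii).

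For (iv), I would first check that $a_{p_1}>a_{p_1'}$ forces $p_1\ne p_1'$ (else $a_{p_2}=a_{p_2'}$, so $p_2=p_2'$ by Proposition \ref{growing}, contradicting the strict inequality), and similarly that $p_1p_2\ne p_1'p_2'$ (unique factorization combined with the ordering rules out the only possible swap of primes). Then (ii) and (iii), together with the basis bound $\prod_{j=a}^{b}q_j>2^{b^2-(a-1)^2}$ from \eqref{q} and the size bound $p\le 2^{ck^2-3}$ for $p\in\Pp_k$, yield
\[
2^{k_2^2}\le|p_1p_2-p_1'p_2'|\le 2^{c(k_1^2+k_2^2)-6}\quad\text{and}\quad 2^{k_1^2-k_2^2}\le|p_1-p_1'|\le 2^{ck_1^2-3},
\]
which rearrange to $k_2^2<\tfrac{c}{1-c}k_1^2$ and $k_2^2>(1-c)k_1^2$. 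The one real subtlety is that the upper bound, applied with $k_2=k_1$, already forces $c>1/2$ and contradicts the standing hypothesis $c<1/2$; this rules out the case where all four primes lie in $\Pp_K$ and ensures that the lower bound is actually invoked under $k_2<k_1$, where the congruence of part (iii) is non-vacuous.
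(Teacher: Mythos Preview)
Your proof is correct and follows the same route as the paper's: both hinge on the carry-free identity \eqref{digitwise}, read off parts (ii)--(iii) by exponentiating $g_j$ and applying CRT, and derive (iv) from the size bounds $\prod q_j>2^{\text{(telescoping sum of odd numbers)}}$ versus $p\le 2^{ck^2-3}$. Your treatment of (i) via the trichotomy and the domination argument, and your explicit verification that $p_1\ne p_1'$ and $p_1p_2\ne p_1'p_2'$ before invoking the congruences as genuine lower bounds, spell out steps the paper leaves implicit; the paper simply defines $k_1,k_2$ as the top indices where the digit sum crosses $q_j+1$ and $2q_j+2$ respectively and asserts (i) from there.
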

\begin{proof}
Since $0\le x_j(p_1)+x_j(p_2)<4q_j$ for all $j$, the equality $a_{p_1}+a_{p_2}=a_{p_1'}+a_{p_2'}$ implies that the digits of both sums are equal:
 \begin{equation}\label{igualdad}x_j(p_1)+x_j(p_2)=x_j(p_1')+x_j(p_2')\end{equation} for all $j$.
 By construction, $p_1\in  \Pp_{k_1}$ and $p_2\in \Pp_{k_2}$ where $k_1$ is the largest $j$ such that $$x_j(p_1)+x_j(p_2)\ge q_j+1$$ and $k_2$ is the largest $j$ such that $$x_j(p_1)+x_j(p_2)\ge 2q_j+2.$$ This observation proves part i) and we can represent $a_{p_1},a_{p_2},a_{p_1'},a_{p_2'}$ as \begin{eqnarray*}a_{p_1}&=&x_{k_1}(p_1)\dots x_{k_2}(p_1)\dots x_1(p_1) \\
a_{p_2}&=&\qquad \qquad   \ x_{k_2}(p_2)\dots x_1(p_2)\\
a_{p_1'}&=&x_{k_1}(p_1')\dots x_{k_2}(p_1')\dots x_1(p_1') \\
a_{p_2'}&=&\qquad \qquad   \ x_{k_2}(p_2')\dots x_1(p_2').
\end{eqnarray*}To prove parts ii) and iii) first we observe that \eqref{igualdad} implies that for all $j$ we have
$$g_j^{x_j(p_1)+x_j(p_2)}\equiv g_j^{x_j(p_1')+x_j(p_2')}\pmod{q_j}.$$ We also know that if $p\in \Pp_k$, then
$g_j^{x_j(p)}\equiv p\pmod{q_j}$ for $j\le k$  and $g_j^{x_j(p)}\equiv 1\pmod{q_j}$ when $j> k.$

Thus, for $j\le k_2$ we have that
$p_1p_2\equiv p_1'p_2'\pmod{q_j}$  and then $$p_1p_2\equiv p_1'p_2'\pmod{q_1\cdots q_{k_2}}.$$

If $k_2<k_1$, for $k_2+1\le j\le k_1$ we have that
$p_1\equiv p_1'\pmod{q_j}$ and then $$p_1\equiv p_1'\pmod{q_{k_2+1}\cdots q_{k_1}}.$$

Part ii) and the inequalities on $p_i$ and $q_j$ yield
$$2^{c(k_1^2+k_2^2)}\ge |p_1p_2-p_1'p_2'|\ge q_1\cdots q_{k_2}>2^{1+3+\cdots +(2k_2-1)}=2^{k_2^2}\Longrightarrow k_2^2<\frac c{1-c}k_1^2.$$ In particular it implies that $k_2<k_1$ and we can apply  part iii), which gives   $$2^{ck_1^2}\ge |p_1-p_1'|\ge q_{k_2+1}\cdots q_{k_1}>2^{(2k_2+1)+\cdots +(2k_1-1)}=2^{k_1^2-k_2^2} \Longrightarrow k_2^2>(1-c)k_1^2.$$
\end{proof}

\subsection{Proof of Theorem \ref{sqrt5} } To prove the first part of Theorem \ref{sqrt5} we simply observe that if there is a repeated sum, then  Proposition \ref{con},  iv) implies that $1-c<\frac c{1-c}$, which does not hold for $c=\frac {3-\sqrt 5}2$. Thus, $A_{\overline q,c}$ is a Sidon sequence for this value of $c$.

\subsection{Proof of Theorem \ref{sqrt2}}\label{psqrt2}
Proposition \ref{con} implies that all the repeated sums that may appear  are of the form:
\begin{equation}\label{ii}a_{p_1}+a_{p_2}=a_{p_1'}+a_{p_2'},\qquad \{p_1,p_2\}\ne \{p_1',p_2'\}\end{equation}
with $p_1,p_1'\in \Pp_{k_1}, p_2,p_2'\in \Pp_{k_2}$ and $ k_2^2<\frac c{1-c}k_1^2$.

Using the notation $Q_1=q_1\cdots q_{{k_2}}$ and $ Q_2=q_{{k_2}+1}\cdots q_{k_1}$ we can write \begin{eqnarray*}p_1(p_2-p_2')&=&p_1p_2-p_1'p_2'+(p_1'-p_1)p_2'\\&=&\frac{p_1p_2-p_1'p_2'}{Q_1}Q_1+\frac{(p_1'-p_1)  }{Q_2}p_2'Q_2.\end{eqnarray*}
Proposition \ref{con} also implies that if \eqref{ii} holds then $s_1=\frac{p_1p_2-p_1'p_2'}{Q_1}$ and $s_2=\frac{p_1'-p_1  }{Q_2}$ are nonzero integers satisfying $$|s_1|=\frac{|p_1p_2-p_1'p_2'|}{Q_1}\le \frac{2^{c(k_1^2+k_2^2)-6}}{Q_1},\qquad |s_2|=\frac{|p_1'-p_1| }{Q_2}\le \frac{2^{ck_1^2-3}}{Q_2}.$$
Thus, if $p_1\in \Pp_{k_1}$ is involved in sum repeated sum as in \eqref{ii} then it divides some integer $s\ne 0$ of the set
$$S_{k_2,k_1}=\left \{s=s_1Q_1+s_2p_2'Q_2:\ 1\le |s_1|\le \frac{2^{c(k_1^2+k_2^2)-6}}{Q_1},\ 1\le |s_2|\le \frac{2^{ck_1^2-3}}{Q_2},\ p_2'\in \Pp_{k_2}\right \}.$$
We denote by $\B_{k_1}$ the set of these primes:
 $$\B_{k_1}=\left \{p_1\in \Pp_{k_1}:\ p_1\mid s \text{ for some } s\in S_{k_2,k_1},\ s\ne 0, \text{ with } k_2^2<\tfrac c{1-c}k_1^2\right \}.$$
 From the analysis above it is clear that for $$\Pp^*=\bigcup_{k_1}\left (\Pp_{k_1}\setminus \B_{k_1}\right )$$ the sequence $A_{\overline q,c}^*=(a_p)_{p\in \Pp^*}$ is a Sidon sequence. To prove that $A_{\overline q,c}^*(x)=x^{c+o(1)}$ it is enough to  show that $|\B_{k_1}|\le(\frac 12+o(1))|\Pp_{k_1}|$. We will prove that this holds for $c=\sqrt 2-1$.

 First we observe that an integer $s\ne 0$ of $S_{k_2,k_1}$ cannot be dived by two primes $p,p'\in \Pp_{k_1}$. Otherwise,
we would have that $$2^{2c(k_1-1)^2-6}<pp'\le |s|\le 2\cdot 2^{c(k_1^2+k_2^2)-6}<2^{\frac c{1-c}k_1^2-5},$$
which does not hold for $k_1$ large enough since $2c>\frac c{1-c}$ for $c<1/2$.

Therefore, using the estimates $Q_1Q_2=q_1\dots q_{k_1}>2^{k_1^2}$ and $|\Pp_{k_2}|=\pi(2^{ck_2^2})\le 2\cdot 2^{ck_2^2}/\log (2^{ck_2^2})$ we have
\begin{eqnarray*}|\B_{k_1}|&\le & \sum_{k_2<\sqrt{\frac c{1-c}}k_1}|S_{k_2,k_1}|\le \sum_{k_2<\sqrt{\frac c{1-c}}k_1}\left (2\cdot \frac{2^{c(k_2^2+k_1^2)-6}}{Q_1}\right )\left (2\cdot \frac{2^{ck_1^2-3}}{ Q_2}\right )|\Pp_{k_2}|\\ &\le &\frac{2^{-6}}{c\log 2}\ 2^{(2c-1)k_1^2}\sum_{k_2<\sqrt{\frac c{1-c}}k_1}\frac{2^{2ck_2^2}}{k_2^2}\le \frac{2^{-6}}{c\log 2}\cdot \frac{2(1-c)}c\cdot  \frac{2^{\left (\frac{2c}{1-c}-1\right)k_1^2}}{k_1^2}.\end{eqnarray*}

Then, using the the identities $\frac{2c}{1-c}-1=c$ and $\frac {1-c}c=\sqrt 2$ for $c=\sqrt 2-1$ and the estimate \begin{eqnarray*}|\Pp_{k_1}|=\pi\left (2^{ck_1^2-3}\right )-\pi\left (2^{c(k_1-1)^2-3}\right )&= &\frac{2^{ck_1^2-3}}{c k_1^2\log 2}(1+o(1))\end{eqnarray*}we have the wanted inequality,
\begin{eqnarray*}|\B_{k_1}|\le  \frac{2^{-5}\sqrt 2}{c\log 2} \frac{2^{ck_1^2}}{k_1^2}\le \left (\frac{1}2+o(1)\right )|\Pp_{k_1}|.\end{eqnarray*}

\section{Infinite $B_h$ sequences} In the following we  shall use the same notation with only minor changes. We consider the basis $\overline q:= h^2q_1,\dots,h^2q_j\dots $ where the primes $q_j$ satisfy $2^{2j-1}<q_j\le 2^{2j+1}$. For each $j$, let $g_j$ be a generator of $\F_{q_j}^*$.

Fix $c=\sqrt{(h-1)^2+1}-(h-1)$ and let $\Pp=\cup_{k\ge 3}\Pp_k$ where
$$\Pp_k=\left \{p \text{ prime }:\ 2^{c(k-1)^2\left (1-1/\sqrt{\log(k-1)}\right )}< p\le 2^{ck^2\left (1-1/\sqrt{\log k}\right )}\right \}.$$ We construct the sequence $A_{\overline q,c}=(a_p)_{p\in \Pp}$ as follows: for $p\in \Pp_k$ we define
%$$a_p=x_1(p)+\sum_{2\le j\le k}x_j(p)(h^2q_1)\cdots (h^2q_{j-1}),$$
$$a_p=x_k(p)\dots x_1(p),$$
where he digit $x_j(p)$ is the solution of the congruence
\begin{eqnarray*}g_j^{x_j(p)}\equiv p\pmod{q_j},\qquad (h-1)q_j+1\le x_j(p)\le hq_j-1.\end{eqnarray*} We define $x_j(p)=0$ for $j>k$.

We observe that the sequence $A_{\overline q,c}=(a_p)_{p\in \Pp}$ will be a $B_h$ sequence if and only if for any $l,\ 2\le l\le h$ there not exists a repeated sum in the form
\begin{eqnarray}\label{rep}
a_{p_1}+\cdots +a_{p_l}&=&a_{p_1'}+\cdots +a_{p_l'}\\
\{a_{p_1},\dots,a_{p_l}\}&\cap& \{a_{p_1'},\dots ,a_{p_l'}\}=\emptyset\nonumber\\
a_{p_1}\ge &\cdots &\ge a_{p_l}\nonumber\\
a_{p_1'}\ge &\cdots & \ge a_{p_l'}.\nonumber
\end{eqnarray}

The following proposition is just a generalization of Proposition \ref{con}.
\begin{prop} \label{con2}Suppose that there exist $p_1,\dots, p_l,p_1',\dots,p_l'\in A_{\overline q,c}$ satisfying \eqref{rep}. Then we have:
\begin{enumerate}
  \item[i)] $p_i,p_i'\in \Pp_{k_i},\ i=1,\dots,l$ for some $k_l\le \cdots \le k_1$.
  \item[ii)]
  $$\begin{matrix}
p_1\cdots p_l&\equiv &p_1'\cdots p_l'&\pmod{q_1\cdots q_{k_l}}&\\
p_1\cdots p_{l-1}&\equiv &p_1'\cdots p_{l-1}'&\pmod{q_{k_l+1}\cdots q_{k_{l-1}}}& \text{ if }k_l<k_{l-1}\\
\cdots & & \cdots\\
p_1&\equiv &p_1'&\pmod{q_{k_2+1}\cdots q_{k_1}}& \text{ if } k_2<k_1.\end{matrix}$$
\item[iii)] $k_l^2< \frac c{1-c}\left (k_1^2+\cdots +k_{l-1}^2\right ).$
\item[iv)] $q_1\cdots q_{k_1}\mid \prod_{i=1}^l\left (p_1\cdots p_i-p_1'\cdots p_i'\right ).$
\end{enumerate}
\end{prop}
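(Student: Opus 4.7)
The plan is to generalize Proposition \ref{con} to $l \le h$ summands. The key structural input is that no carries arise in base $\overline q = h^2 q_1, h^2 q_2, \ldots$ when adding $l$ elements of $A_{\overline q,c}$: at position $j$, if $m$ of the primes $p_i$ satisfy $k_i \ge j$ (so $x_j(p_i) \in [(h-1)q_j+1, hq_j-1]$) and the other $l-m$ have $x_j(p_i) = 0$, then the digit sum lies in the interval $I_m := [m((h-1)q_j+1),\, m(hq_j-1)]$, which for $m \le l \le h$ stays strictly below $h^2 q_j$. Hence $\sum_i a_{p_i} = \sum_i a_{p_i'}$ forces digit-wise equality $\sum_i x_j(p_i) = \sum_i x_j(p_i')$ for every $j$.

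For part i), I would show that $I_0, I_1, \ldots, I_l$ are pairwise disjoint by computing $\min I_{m+1} - \max I_m = q_j(h - 1 - m) + 2m + 1$, which is positive for every $m \le h - 1$; this covers our range $m \le l - 1 \le h - 1$. Consequently the digit sum at each position $j$ determines $m^{\max}_j := \#\{i : k_i \ge j\}$ uniquely, so the multisets $\{k_i\}$ and $\{k_i'\}$ agree; since $a_{p_i} \ge a_{p_{i+1}}$ forces $k_i \ge k_{i+1}$ (a larger $k$ means a higher leading-digit position), reindexing in non-increasing order yields $k_i = k_i'$.

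For parts ii) and iv), for each $j$ the digit equality combined with $g_j^{x_j(p)} \equiv p \pmod{q_j}$ (when $j \le k$, with $g_j^{x_j(p)} \equiv 1$ otherwise) gives $\prod_{k_i \ge j} p_i \equiv \prod_{k_i' \ge j} p_i' \pmod{q_j}$. Letting $j$ range through the block $k_{i+1} < j \le k_i$ (with the convention $k_{l+1} = 0$) and applying CRT produces exactly the system of congruences in ii). Multiplying these over $i = 1, \ldots, l$, and using that the blocks $q_{k_{i+1}+1} \cdots q_{k_i}$ are pairwise coprime, yields $q_1 \cdots q_{k_1} \mid \prod_{i=1}^l(p_1 \cdots p_i - p_1' \cdots p_i')$, which is iv).

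For part iii), the congruence modulo $q_1 \cdots q_{k_l}$ together with $q_1 \cdots q_{k_l} > 2^{k_l^2}$ and the size bounds $p_i \le 2^{c k_i^2}$ (so $|p_1\cdots p_l - p_1' \cdots p_l'| < 2^{c \sum_i k_i^2}$) gives $2^{k_l^2} \le 2^{c\sum_i k_i^2}$, provided the difference is nonzero. Non-vanishing follows from unique prime factorization: the hypothesis $\{a_{p_i}\} \cap \{a_{p_i'}\} = \emptyset$ together with the injectivity of $p \mapsto a_p$ from Proposition \ref{growing} forces the multisets $\{p_i\}$ and $\{p_i'\}$ to differ, hence so do their products. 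Rearranging gives $k_l^2 < \frac{c}{1-c}(k_1^2 + \cdots + k_{l-1}^2)$. The main technical subtlety is the carry-free/disjointness analysis of step one: the inequality $q_j(h-1-m) + 2m+1 > 0$ becomes tight exactly at $m = h$, which is what fixes the base to be $h^2 q_j$ and confines the method to sums of at most $h$ terms, accounting for the $h$-dependence of the exponent in Theorem \ref{Bh}.
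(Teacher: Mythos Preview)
Your proposal is correct and follows essentially the same route as the paper, which itself only sketches the argument by pointing back to Proposition~\ref{con} and recording the characterization ``$k_i$ is the largest $j$ with $x_j(p_1)+\cdots+x_j(p_l)\ge i((h-1)q_j+1)$''. Your explicit disjointness computation for the intervals $I_m$ and the non-vanishing argument for $p_1\cdots p_l-p_1'\cdots p_l'$ spell out details the paper leaves implicit, but the underlying strategy is identical.
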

\begin{proof}
The proof is similar to the proof of Proposition \ref{con}. Here $k_i$ is the largest $j$ such that $x_j(p_1)+\cdots +x_j(p_l)\ge i((h-1)q_j+1)$. Part iii) is consequence of the first congruence of part ii). Part iv) is also an obvious consequence of part ii).
\end{proof}

\subsection{Proof of Theorem \ref{Bh}}
The sequence $A_{\overline q,c}$ defined at the beginning of this section may  not be a $B_h$ sequence. The plan of the proof is to remove from $A_{\overline q,c}=(a_p)_{p\in \Pp}$ the largest element appearing in each such repeated sum to obtain a true $B_h$ sequence.

More precisely, we define $\Pp^*=\Pp^*(\overline q)$ as the set
$$\Pp^*=\bigcup_k\left (\Pp_k\setminus \B_k(\overline q)\right )$$
where
$\B_k(\overline q)=\{p\in \Pp_k:\  a_p \text{ is the largest involved in some equation }\eqref{rep}\}.$

It is then clear that the sequence
$A^*_{\overline q,c}=(a_p)_{p\in \Pp^*}$
is a $B_h$ sequence.

We can proceed as in the previous section to deduce that
$A_{\overline q,c}(x)=x^{c+o(1)}$. If in addition, $|\B_k(\overline q)|=o(|\Pp_k|)$, we have that $$A^*_{\overline q,c}(x)\sim A_{\overline q,c}(x)=x^{c+o(1)}.$$
Thus, the proof of Theorem \ref{Bh}  will be completed if we prove that there exists a basis $\overline q$ such that $|\B_k(\overline q)|=o(|\Pp_k|)$.

For $2\le l\le h$ we write $$\text{Bad}_l(\overline q,k_l,\dots,k_1)=\{(p_1,\dots ,p_l'):\ p_i,p_i'\in \Pp_{k_i},\ i=1,\dots,l\ \text{satisfying } \eqref{rep}\}.$$
Next let us observe that each  $p\in \B_k(\overline q)$ comes from some $(p_1,\dots, p_l')\in \text{Bad}_l(\overline q,k_l,\dots,k_1)$, $ 2\le l\le h, \ k_l\le \cdots \le k_1=k$. Thus,
\begin{eqnarray}\label{bad}|\B_k(\overline q)|&\le& \sum_{l=2}^h\sum_{k_l\le \cdots \le k_1=k}|\text{Bad}_l(\overline q,k_l,\dots,k_1)|\\
&\le &hk^{h-1}\max_{\substack{2\le l\le h\\ k_l\le \cdots \le k_1=k}}|\text{Bad}_l(\overline q,k_l,\dots,k_1)|.\nonumber\end{eqnarray}

It happens that we are not able to give a good upper bound for $|\text{Bad}_l(\overline q,k_l,\dots,k_1)|$ for a concrete sequence $\overline q:=q_1<q_2<\cdots $, but we can do it in average. If the reader is familiarized with Ruzsa's work, the basis $\overline q$ will play the same role as the real parameter $\alpha$ in Ruzsa's construction.

We consider the probability space of the basis  $\overline q:=h^2q_1,h^2q_2\dots  $ where each $q_j$ is chosen at random uniformly between all the primes in the interval $(2^{2j-1},2^{2^j+1}]$. In particular we  use that $\pi(2^{2k+1})-\pi(2^{2k-1})\gg 2^{2k}/k=2^{2k-1+O(\log k)}$ to deduce that  for any primes $q_1<\cdots <q_{k_1}$ satisfying $2^{2j-1}<q_j\le 2^{2j+1}$ we have
\begin{eqnarray*}\mathbb P( h^2q_1,\dots,h^2q_{k_1}\in \overline q)&=&\prod_{k=1}^{k_1}\frac 1{\pi(2^{2k+1})-\pi(2^{2k-1})}\\
&\le &2^{-k_1^2+O(k_1\log k_1)}.\end{eqnarray*}
Thus, for a given $(p_1,\dots,p_l')$, we use Proposition \ref{con2}, iv) and the estimate $\tau(n)=n^{O(1/\log\log n)}$ for the divisor function to deduce  that
\begin{eqnarray*}\mathbb P( (p_1,\dots,p_l')\in \text{Bad}_l(\overline q,k_l,\dots,k_1))&\le &\sum_{\substack{q_1,\dots,q_{k_1}\\q_1\cdots q_{k_1}\mid \prod_{i=1}^l\left (p_1\cdots p_i-p_1'\cdots p_i'\right )}}
 \mathbb P( q_1,\dots,q_{k_1}\in \overline q)\\ &\le &\tau \left (\prod_{i=1}^l\left (p_1\cdots p_i-p_1'\cdots p_i'\right )\right )2^{-k_1^2+O(k_1\log k_1)}\\
&\le &2^{-k_1^2+O(k_1^2/\log k_1)}.\end{eqnarray*}

Thus, using Proposition \ref{con2} iii) in the last inequality we have:
\begin{eqnarray*}\E(|\{(p_1,\dots ,p_l')& : & \ p_i,p_i'\in \Pp_{k_i},\ i=1,\dots ,l \text{ satisfying }\eqref{rep}\}|)\\
&\le &2^{-k_1^2+O(k_1^2/\log k_1)}\#\{(p_1,\dots,p_l'):\ p_i,p_i'\in \Pp_{k_i}\}\\
&\le &2^{-k_1^2+O(k_1^2/\log k_1)}|\Pp_{k_1}|^2\cdots |\Pp_{k_l}|^2\\
&\le &2^{-k_1^2+O(k_1^2/\log k_1)}\cdot 2^{{2c}\left (k_1^2+\cdots +k_l^2\right )-2ck_1^2/\sqrt{\log k_1}}\\
&\le& 2^{-k_1^2+\frac {2c}{1-c}\left (k_1^2+\cdots +k_{l-1}^2\right )-(2c+o(1))k_1^2/\sqrt{\log k_1}}\\
&\le& 2^{\left (-1+\frac{2c(l-1)}{1-c}\right )k_1^2-(2c+o(1))k_1^2/\sqrt{\log k_1}}.\end{eqnarray*}

Using \eqref{bad} we   have
\begin{eqnarray*}\E(|\B_k(\overline q)|)\le 2^{\left (-1+\frac{2c(h-1)}{1-c}\right )k^2-(2c+o(1))k^2/\sqrt{\log k}}.\end{eqnarray*}

Finally we use that $-1+\frac{2c(h-1)}{1-c}-c=0$ for $c=\sqrt{(h-1)^2+1}-(h-1)$ to  obtain
\begin{eqnarray*}\E\left (\sum_k\frac{|\B_k(\overline q)|}{|\Pp_k|}\right )&\le &\sum_k k^22^{\left (-1+\frac{2c(h-1)}{1-c}-c\right )k^2-(c+o(1))k^2/\sqrt{\log k}}\\ &\le &\sum_k k^22^{-(c+o(1))k^2/\sqrt{\log k}} .\end{eqnarray*}
Since the series is convergent we have that for almost all sequences $\overline q$ the series $$\sum_k\frac{|\B_k(\overline q)|}{|\Pp_k|}$$ is convergent. Therefore, for any of these  $\overline q$ we have that $|\B_k(\overline q)|=o(|\Pp_K|),$ which is what we wanted to prove.

\section{An alternative construction} The theorems proved in this paper could have been proved using the following alternative construction, which, although  has the same flavor than the one described in section 2, it uses the irreducible polynomials in $\F_2[X]$ instead of the  prime numbers.

We consider the basis $\overline q:=4\cdot 1,4\cdot 3,\dots, 4\cdot (2j-1),\dots $ and any infinite  sequence of irreducibles polynomials in $\F_2[X]$ of degree $\deg(q_j)=2j-1$. For each $j$, let $g_j(X)$ a generator of  $\F_2[X]/q_j(X)$. Fix $c,\ 0<c<1/2$ and for each $k\ge 2$, let  \begin{equation}\label{PK}\Pp_k=\left \{p\text{ irreducible polynomials in }\F_2[X]:\ c(k-1)^2<\deg p\le ck^2\right \}.\end{equation}

Consider the sequence $A_{\overline{q},c}=(a_p)$ where,
for any $p(X)\in \Pp_k$ (we write $p:=p(X)$ for short), the element $a_p$ is defined by
$$a_p:=x_k(p)\dots x_1(p)$$ where
 $x_j(p)$  is the solution of the polynomial congruence
$$g_j(X)^{x_j(p)}\equiv p(X)\pmod{q_j(X)},\qquad 2^{2j-1}+1\le x_j(p)\le 2^{2j}-1.$$ Let us define $x_j(p)=0$ for $j> k$.
More formally we can write
 \begin{equation}\label{bp}a_p=\sum_{1\le j\le k}x_j(p)2^{j^2-1}.\end{equation}
We use that the number of irreducible polynomials of degree $j$ in $\F_2[X]$ is $\gg 2^j/j$ to deduce easily that in this case we also have  $A_{\overline q,c}(x)=x^{c+o(1)}$. Proposition \ref{con}  also works here, except that now the congruences are in $\F_2[X]$.
It is then  easy to adapt the proofs of Theorems \ref{sqrt5} and  \ref{sqrt2} to this new construction.

The proof of Theorem \ref{Bh} using this construction is also similar to that given in section 2, except that now we consider the basis $\overline q=h^2\cdot 1,h^2\cdot 3,\dots,h^2\cdot(2j-1),\dots$ and define $$a_p=x_k(p)\dots x_1(p)$$
where $x_j(p)$  is the solution of the  congruence
$$g_j^{x_j(p)}\equiv p(X)\pmod{q_j(X)},\qquad (h-1)2^{2j-1}+1\le x_j(p)\le h2^{2j-1}-1.$$

Perhaps,  the less known ingredient needed in the proof may be the upper bound $\tau(r(X))\le 2^{O(n/\log n)}$ for the number of the  divisors of a polynomial $r(X)\in \F_2[X]$ of degree $n$ (see \cite{P}).


\begin{thebibliography}{9}
\bibitem{AKS} M. Ajtai, J. Koml\'{o}s, and E. Szemer\'{e}di, \emph{A dense infinite Sidon sequence}, European J.
Combin. 2 (1981), 1--11.
%\bibitem{A} N. Alon and J. Spencer, \emph{The probabilistic method}, John Wiley - Sons (2004)
\bibitem{CT} J. Cilleruelo and R. Tesoro, \emph{Infinite dense $B_h$ sequences}, preprint. Arxiv.
\bibitem{E} P. Erd\H os, \emph{Solved and unsolved problems in combinatorics
and combinatorial number theory}, Combinatorics, graph theory and computing, Proc. 12th Southeast. Conf., Baton Rouge 1981, Congr. Numerantium 32, 49-62 (1981)
%\bibitem{GO} T. Gowers, \emph{What are dense Sidon subsets of $\{1,2,\dots, n\}$ like?}, Gowers's Webblog, July 2012
\bibitem{P} Ph. Piret, \emph{On the number of divisors of a polynomial over $GF(2)$}, Applied algebra, algorithmics and error-correcting codes (Toulouse, 1984), 161--168, Lecture Notes in Comput. Sci., 228, Springer, Berlin, 1986.
\bibitem{Ru} I. Ruzsa, \emph{An infinite Sidon sequence}, J. Number Theory 68 (1998).
\end{thebibliography}
\end{document}